\documentclass[12pt]{amsart}
\usepackage[dvipsnames]{xcolor}
\usepackage{float, comment, ytableau, hyperref, enumitem, pgffor, circledsteps, etoolbox, xstring, mathtools, amsmath, amssymb, tikz, tikz-cd, quiver, adjustbox}
\usepackage[maxbibnames=99,giveninits,maxnames=10,backend=biber,block=space, style = numeric, sorting = none]{biblatex}

\usepackage[capitalize, nameinlink]{cleveref}

\hypersetup{colorlinks, linkcolor = RoyalPurple, citecolor = RoyalPurple, urlcolor = RoyalPurple}

\title[Even Dimensional Partitions Modulo 4]{Enumeration of even dimensional partitions modulo 4}
\author{Aditya Khanna}
\date{}

\bibliography{refs_even}

\newtheorem{thm}{Theorem}
\newtheorem{lem}[thm]{Lemma}
\newtheorem{prop}[thm]{Proposition}
\newtheorem{cor}[thm]{Corollary}
\theoremstyle{definition}
\newtheorem{defn}[thm]{Definition}
\newtheorem{example}[thm]{Example}
\newtheorem{remark}[thm]{Remark}
\newtheorem{notation}[thm]{Notation}

\newcommand{\nth}[1]{
\IfEqCase*{#1}{%
{1}{$#1^\mathrm{st}$~}%
{2}{$#1^\mathrm{nd}$}%
{3}{$#1^\mathrm{rd}$}%
}[$#1^\mathrm{th}$]
}

\newcommand{\boks}[1]{\ytableausetup{boxsize = #1cm}}

\DeclareMathOperator{\sh}{dg}

\DeclareMathOperator{\core}{{core}}

\DeclareMathOperator{\T}{\mathcal{T}}
\DeclareMathOperator{\bin}{bin}
\ytableausetup{aligntableaux = center, nobaseline}
\setlength{\parskip}{-2pt}
\allowdisplaybreaks
\begin{document}
\begin{abstract}
The number of standard Young tableaux possible of shape corresponding to a partition $\lambda$ is called the dimension of the partition and is denoted by $f^{\lambda}$. Partitions with odd dimensions were enumerated by McKay and were further characterized by Macdonald using the theory of 2-core towers. We use the same theory to extend the results to partitions of $n$ with dimensions congruent to 2 modulo 4 which are enumerated by $a_2(n)$. We provide explicit results for $a_2(n)$ when $n$ has no consecutive 1s in its binary expansion and give a recursive formula to compute $a_2(n)$ for all $n$.
\end{abstract}
\maketitle

\section{Introduction}
\subsection{Background} 
We call $\lambda = (\lambda_1, \ldots, \lambda_k)$ a \textit{partition of $n$} if the entries are weakly decreasing and sum to $n$. The \textit{dimension of the partition $\lambda$}, denoted by $f^\lambda$, is the number of standard Young tableaux (\cref{introsec}) on shape $\lambda$. Denote by $m_p(n)$, the number of partitions of $n$ with dimension not divisible by $p\in \mathbb{N}$.\\
McKay \cite{mckay} computed $m_2(n)$ and Macdonald \cite{mcd} gave a complete answer for $m_p(n)$ using the theory of $p$-core towers. For $n = 2^{k_1}+\ldots + 2^{k_\ell}$ with $k_1 > \ldots > k_\ell$, we have $m_2(n) = 2^{k_1 + \ldots + k_\ell}$.
In particular, $m_2(n)$ enumerates partitions of $n$ whose dimensions are odd. We call these \textit{odd partitions} and will denote their count by $a(n)$.

Recently, there has been some interest in extending Macdonald's results. In their paper, Amrutha P and T. Geetha \cite{geetha} tackle the problem of computing $m_{2^k}(n)$. Their equation (6) gives a general solution to this problem but the results are not amenable to enumeration.
They provide explicit results for $m_4(n)$ when $n = 2^\ell$ which follows from \cref{2mod4cor}. In this paper, we provide explicit recursions which can be used to compute $m_4(n)$ for a general $n$ (\cref{2mod4thm}) and find a special case that lends itself to a pleasing closed form (\cref{2mod4cor}).
We prove this via the theory of 2-core towers which has been used to enumerate chiral partitions \cite{spallone1} and other representations of Coxeter groups \cite{spallone2}. The main motivation for computing values modulo 4 comes from the classification of spinorial representations of the symmetric group \cite{jyo}.

\subsection{Main Results}\label{ssec:mainresults}
Let $a_i(n)$ denote the number of partitions of $n$ with dimension congruent to $i$ modulo 4. Then the number of odd partitions is $a(n)= m_2(n) = a_1(n) + a_3(n)$. 

\begin{thm}\label{2mod4thm}
Let $n = 2^R + m$ such that $m < 2^R$. Then we have 
\[a_2(n) = \begin{cases}
2^R\cdot a_2(m) + \binom{2^{R-1}}{2}\cdot a(m), & \text{ if }m < 2^{R-1}\\
2^R\cdot a_2(m) + \left(\binom{2^{R-1}}{3} + 2^{R-1}\right) \cdot \displaystyle{\frac{a(m)}{2^{R-1}}}, & \text{ if } 2^{R-1} \leq m < 2^R.
\end{cases}\]
\end{thm}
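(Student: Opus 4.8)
The plan is to translate the condition $f^\lambda \equiv 2 \pmod 4$, i.e. $v_2(f^\lambda)=1$ where $v_2$ is the $2$-adic valuation, into a statement about the $2$-core tower of $\lambda$ and then enumerate the admissible towers. The starting point is the valuation formula from Macdonald's theory: if $c_i$ denotes the total number of boxes occupying the $2^i$ nodes on level $i$ of the $2$-core tower of $\lambda \vdash n$, then $v_2(f^\lambda) = \big(\sum_{i\ge 0} c_i\big) - s_2(n)$, where $s_2(n)=\ell$ is the number of $1$'s in the binary expansion of $n$. Since $\sum_i 2^i c_i = n$ forces $\sum_i c_i \ge s_2(n)$, with equality exactly for odd $\lambda$, the partitions with $v_2(f^\lambda)=1$ are precisely those whose tower carries exactly one box beyond the binary minimum. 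Crucially $v_2(f^\lambda)$ depends only on the profile $(c_i)$, so $a_2(n) = \sum \prod_{i\ge 0} N(i,c_i)$, summed over profiles with $\sum_i 2^i c_i = n$ and $\sum_i c_i = s_2(n)+1$, where $N(i,c)$ counts the ways to distribute $c$ boxes as $2$-cores among the $2^i$ nodes on level $i$. I would first record $N(i,0)=1$, $N(i,1)=2^i$, $N(i,2)=\binom{2^i}{2}$ and $N(i,3)=\binom{2^i}{3}+2^i$; the last is the key computation, since $3$ boxes form a $2$-core in two ways, either the single staircase $(2,1)$ on one node ($2^i$ choices) or three separate single boxes ($\binom{2^i}{3}$ choices).

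Writing $n=2^R+m$ with $m<2^R$, I would enumerate the admissible profiles by conditioning on the top two levels. Because $2^{R+1}>n$, every admissible profile has $c_i=0$ for $i>R$ and $c_R\in\{0,1\}$. When $c_R=1$, the box on level $R$ contributes the factor $N(R,1)=2^R$ and, since $s_2(n)=s_2(m)+1$, the levels $i<R$ form a profile of weight $m$ with $\sum_{i<R}c_i=s_2(m)+1$; these are exactly the $v_2=1$ profiles for $m$, and as the node counts on levels $i<R$ are unchanged, they contribute $a_2(m)$. This branch yields the term $2^R\cdot a_2(m)$ in both cases.

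When $c_R=0$, the weight $2^R+m$ must be built from levels $i\le R-1$, and $2^{R+1}>n$ forces $c_{R-1}\le 3$, while reaching weight $\ge 2^R$ requires $c_{R-1}\ge 2$. If $m<2^{R-1}$, then $c_{R-1}=3$ would overshoot ($3\cdot 2^{R-1}>n$), so $c_{R-1}=2$, contributing $N(R-1,2)=\binom{2^{R-1}}{2}$; the residual weight $m$ on levels $i<R-1$ then carries $s_2(m)$ boxes, i.e. the binary profile of $m$, whose placement count is $a(m)$, giving $\binom{2^{R-1}}{2}a(m)$. If instead $2^{R-1}\le m<2^R$, then $c_{R-1}=2$ cannot represent the now-present bit at position $R-1$ from the lower levels, so $c_{R-1}=3$, contributing $N(R-1,3)=\binom{2^{R-1}}{3}+2^{R-1}$; the residual weight is $m'=m-2^{R-1}<2^{R-1}$, whose binary profile has placement count $a(m')=a(m)/2^{R-1}$, giving the second term of the second case.

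The main obstacles are the two structural inputs rather than the bookkeeping. First, justifying that $v_2(f^\lambda)$ is governed solely by the level-profile via $\sum_i c_i - s_2(n)$, which I would obtain by unwinding the hook-length behaviour under the $2$-quotient, where the multiset of even hook lengths halves onto the hooks of the quotient and the $2$-core contributes only odd hooks. Second, verifying that the feasibility bounds on $c_R$ and $c_{R-1}$ genuinely eliminate every profile other than those listed, so that no cross terms are missed. Once these are in place the recursion assembles by adding the $c_R=1$ and $c_R=0$ contributions; the final ingredient is the identity $\prod_i N(i,\epsilon_i(k))=a(k)$ for the binary profile $(\epsilon_i(k))$ of $k$, which is McKay's formula $a(k)=2^{\sum_j k_j}$ and is exactly what lets the residual sub-towers collapse into $a(m)$ and $a(m)/2^{R-1}$.
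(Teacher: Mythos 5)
Your proposal is correct and follows essentially the same route as the paper: Macdonald's valuation formula $v_2(f^\lambda)=\sum_i w_i(\lambda)-s_2(n)$ leading to the ``one extra box'' characterization (the paper's \cref{prop:mcdmain}), the counting of $2$-core distributions per row including the key $w=3$ value $\binom{2^k}{3}+2^k$ (the paper's \cref{lem:weight}), and McKay's $a(k)=2^{\sum k_j}$ to collapse the residual levels. Your conditioning on $c_R\in\{0,1\}$ is just a regrouping of the paper's sum over the defect location $k\in\bin'(n)$ with the $k=R$ term split off, so the two arguments coincide term by term (and your feasibility bounds on $c_{R-1}$ are immediate from the one-split structure you already established).
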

We call a positive integer $n$ \textit{sparse}\footnote{These numbers are called \textit{fibbinary numbers} in literature as they are enumerated by the Fibonacci numbers. They are the OEIS entry\tt{ A003714}.} if it does not have any consecutive ones in its binary expansion, or equivalently for $n = 2^{k_1}+\ldots + 2^{k_\ell}$, we have $k_i > k_{i+1} + 1$ . For instance 42 is sparse as it has the binary expansion $2^5 + 2^3 + 2^1$. Define $\nu(n) := \ell$, so $\nu(42) = 3$.
\begin{cor}\label{2mod4cor}
When $n$ is sparse, we have 
\[a_2(n) = \begin{cases}
\displaystyle{\frac{a(n)}{8}(n - 2 \nu(n))},& \text{if }n \text{ is even}\\
a_2(n-1), &\text{if } n \text{ is odd}.
\end{cases}\]
\end{cor}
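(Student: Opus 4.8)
The plan is to deduce \cref{2mod4cor} from \cref{2mod4thm} by strong induction on $n$, treating the even and odd cases simultaneously. The first thing I would establish is that for a sparse $n$ the recursion of \cref{2mod4thm} \emph{never} enters its second branch. Writing $n = 2^{k_1} + \cdots + 2^{k_\ell}$ with $R = k_1$ and $m = 2^{k_2} + \cdots + 2^{k_\ell}$, sparseness gives $k_1 \ge k_2 + 2$, so
\[
m < 2^{k_2 + 1} \le 2^{k_1 - 1} = 2^{R-1},
\]
and only the formula $a_2(n) = 2^R a_2(m) + \binom{2^{R-1}}{2} a(m)$ is ever used (when $\ell = 1$ we have $m = 0$, which also satisfies $m < 2^{R-1}$). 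I would also record the consequence of Macdonald's product formula that $a(n) = 2^{k_1 + \cdots + k_\ell} = 2^R a(m)$.

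For the even case, suppose $n$ is even and sparse, so $k_\ell \ge 1$; then $m$ is again even and sparse with $\nu(m) = \nu(n) - 1$ (or $m = 0$ when $\nu(n) = 1$). Applying the inductive hypothesis $a_2(m) = \tfrac{a(m)}{8}(m - 2\nu(m))$ and using $a(m) = a(n)/2^R$, the first term of the recursion becomes $\tfrac{a(n)}{8}(m - 2\nu(m))$; using $\binom{2^{R-1}}{2} = 2^{R-2}(2^{R-1}-1)$, the second term becomes $\tfrac{a(n)}{8}(2^R - 2)$. Combining and substituting $\nu(m) = \nu(n) - 1$ gives
\[
a_2(n) = \frac{a(n)}{8}\bigl(m - 2\nu(m)\bigr) + \frac{a(n)}{8}\bigl(2^R - 2\bigr) = \frac{a(n)}{8}\bigl(2^R + m - 2\nu(n)\bigr) = \frac{a(n)}{8}\bigl(n - 2\nu(n)\bigr),
\]
which is the claimed formula. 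The pure-power base case $\nu(n) = 1$ (where $m = 0$) is checked directly from the recursion using $a_2(0) = 0$ and $a(0) = 1$.

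For the odd case, suppose $n$ is odd and sparse, so $k_\ell = 0$ and $\ell \ge 2$ (the case $n = 1$ being the base case $a_2(1) = 0 = a_2(0)$). Both $n$ and $n-1$ share the leading power $2^R$, and removing it leaves the odd number $m = 2^{k_2} + \cdots + 2^{k_\ell}$ and the even number $m - 1 = 2^{k_2} + \cdots + 2^{k_{\ell-1}}$, each below $2^{R-1}$. Since dropping the unit bit does not change the exponent sum, Macdonald's formula gives $a(m) = a(m-1)$; and the inductive hypothesis (the odd case applied to the smaller number $m$) gives $a_2(m) = a_2(m-1)$. Feeding both equalities into the recursion for $a_2(n)$ and for $a_2(n-1)$ shows the two right-hand sides coincide, so $a_2(n) = a_2(n-1)$.

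The two algebraic simplifications are routine; the points that need the most care are organizational rather than computational. The main obstacle is the bookkeeping that keeps the induction honest: confirming that sparseness forces the first branch of \cref{2mod4thm} at every step, that the peeled-off number $m$ retains exactly the parity and sparseness needed to invoke the correct half of the inductive hypothesis, and that the small boundary cases ($m = 0$ and $\ell = 2$) are cleanly absorbed by the base values $a_2(0) = 0$ and $a_2(1) = 0$.
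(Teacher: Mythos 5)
Your proof is correct, but it takes a genuinely different route from the paper. The paper proves \cref{2mod4cor} directly from the 2-core-tower characterization: starting from $a_2(n) = \sum_{k\in\bin'(n)}\T(\mathbf{w}_k(n))$, sparseness forces $b_{k-1}=0$ for every $k\in\bin'(n)$, so each summand equals $\frac{\T_{k-1}(2)}{\T_k(1)}a(n) = \frac{a(n)}{8}(2^k-2)$ by \cref{lem:weight}, and the sum $\sum_{k\in\bin'(n)}(2^k-2)$ is then evaluated as $n-2\nu(n)$ when $n$ is even, with the odd case reduced to $n-1$ by inspecting the $k$-sum. You instead run a strong induction on the recursion of \cref{2mod4thm}, and your key structural observation --- that sparseness gives $m < 2^{k_2+1} \le 2^{R-1}$, so the recursion never enters its second branch --- is exactly right, as is the identity $2^R a(m) = a(n)$ from Macdonald's formula; I checked the algebra ($\binom{2^{R-1}}{2}a(m) = \frac{a(n)}{8}(2^R-2)$, the telescoping of $\nu$, and the odd case via $a(m)=a(m-1)$ and $a_2(m)=a_2(m-1)$ with base values $a_2(0)=a_2(1)=0$) and it all holds. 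Interestingly, the paper explicitly acknowledges your route exists (``Although we can show the corollary using the recursive relations we just proved, we choose to perform direct computations as we believe they are more illuminating''). The trade-off is real: your induction is self-contained given \cref{2mod4thm} and never re-engages the tower machinery, at the cost of parity/sparseness bookkeeping and boundary cases; the paper's direct computation exposes the per-bit contribution $\frac{a(n)}{8}(2^k-2)$ that explains \emph{why} the closed form $n-2\nu(n)$ emerges, and in particular makes the odd case transparent rather than an artifact of matching two recursions.
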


\section{Notations and Definitions}\label{sec:notation}
\subsection{Partitions, Ferrers Diagrams and Young Tableaux}\label{introsec}
The definitions and proofs in this section are drawn heavily from the monograph of Olsson \cite{olsson}.
\begin{defn}
We call $\lambda = (\lambda_1, \ldots, \lambda_k)$ a \textit{partition} of \textit{size} $n$ if $\lambda_i$ is a positive integer for all $i$ and $|\lambda|:=\sum\limits_{i=1}^k \lambda_i = n$. We write $\lambda\vdash n$.
\end{defn}

We can visualize a partition $\lambda$ by constructing a top-left justified array of boxes with the \nth{i} row containing $\lambda_i$ boxes. This is known as the \textit{Ferrers diagram of $\lambda$} and is denoted by $\sh(\lambda)$.
\begin{example}
For  $\lambda = (4, 3, 3, 1)$, we have \boks{0.3}
\begin{center}
$\sh(\lambda)$ = \ydiagram{4, 3, 3, 1}
\end{center}\end{example} 
\ytableausetup{boxsize = normal}
For $\lambda\vdash n$, we can fill the boxes of $\sh(\lambda)$ with numbers in the set $\{1, 2, \ldots, n\}$. A \textit{standard Young tableau (SYT)} on the shape $\sh(\lambda)$ is a filling such that the entries increase strictly along rows (left to right) and along columns (top to bottom). This implies that each integer from 1 through $n$ appears exactly once in an SYT.
\begin{example}
Continuing with the above example, the following filling is an SYT on $\sh((4, 3, 3, 1))$.
\boks{0.42}
\begin{center}
\ytableaushort{1 2 7 8, 3 5 {10}, 4 9 {11}, 6}
\end{center}

\end{example}

The number of SYTs on $\sh(\lambda)$ is called the \textit{dimension of $\lambda$} and is denoted by $f^{\lambda}$. We call a partition \textit{odd} if $f^\lambda$ is odd and \textit{even} otherwise.

\subsection{Hooks and hook-lengths}
We call the boxes in Ferrers diagrams \textit{cells}. We label the cell in the \nth{i} row from top and \nth{j} column from left by $(i,j)$. For $\lambda = (\lambda_1, \ldots, \lambda_k)$, write $(i,j) \in \sh(\lambda)$ if $1\leq i\leq k$ and $1\leq j \leq \lambda_i$. We say $\sh(\lambda)$ contains a \textit{removable domino} or a \textit{removable 2-hook} if the pair $(i,j), (i,j+1)\in \sh(\lambda)$ or $(i,j), (i+1,j)\in \sh(\lambda)$ can be removed and the rows still weakly decrease in size in the new shape. The condition of being able to remove $(i,j), (i,j+1)\in \sh(\lambda)$ is equivalent to $\lambda$ containing a pair of parts such that $\lambda_i \geq \lambda_{i+1} + 2$. The other condition of being able to remove $(i,j), (i+1,j)\in \sh(\lambda)$ is equivalent to $\lambda$ containing more than one part of size $j$. A partition $\lambda$ that does not contain a removable domino is called a \textit{2-core.}
\begin{lem}\label{lem:2cores}
A partition $\lambda$ is a 2-core if and only if $\lambda = (n , n-1, \ldots 3,2,1)$ for some $n\geq 0$.
\end{lem}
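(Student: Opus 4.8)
The plan is to prove both directions of the biconditional in \cref{lem:2cores}. The easy direction is to verify that every staircase partition $\lambda = (n, n-1, \ldots, 2, 1)$ is a 2-core, i.e.\ that it admits no removable domino. For this I would check both removability conditions stated just before the lemma. Horizontal dominoes are forbidden because consecutive parts of a staircase differ by exactly $1$, so $\lambda_i = \lambda_{i+1} + 1 < \lambda_{i+1} + 2$ for every $i$, ruling out the condition $\lambda_i \geq \lambda_{i+1} + 2$. Vertical dominoes are forbidden because all parts of a staircase are distinct, so no part size is repeated, ruling out the condition of having more than one part of a given size. Hence a staircase has no removable domino and is a 2-core.

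For the harder converse, I would argue that any 2-core must in fact be a staircase. My preferred approach is induction on the number of parts $k$ (equivalently a descent argument on the last part). Suppose $\lambda = (\lambda_1, \ldots, \lambda_k)$ is a 2-core with $k \geq 1$ parts. Since no vertical domino is removable, all parts are distinct, so $\lambda_1 > \lambda_2 > \cdots > \lambda_k \geq 1$, giving strict decrease. Since no horizontal domino is removable, consecutive parts cannot differ by $2$ or more, so $\lambda_i - \lambda_{i+1} \leq 1$ for each $i$; combined with distinctness this forces $\lambda_i - \lambda_{i+1} = 1$ for all $1 \leq i \leq k-1$. It remains to pin down the smallest part: the last row $(k, \lambda_k)$ together with the cell directly below it does not exist, but I must also rule out that $\lambda_k \geq 2$ would itself allow a horizontal removal at the bottom row. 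Indeed, if $\lambda_k \geq 2$ then the two rightmost cells of the last row form a removable horizontal domino (removing them leaves $\lambda_k - 2 \geq 0$, which still weakly decreases relative to the absent row below), contradicting the 2-core property. Therefore $\lambda_k = 1$, and the chain $\lambda_i = \lambda_{i+1} + 1$ forces $\lambda = (k, k-1, \ldots, 2, 1)$, a staircase.

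I expect the main subtlety to lie not in the arithmetic but in handling the boundary conditions of the removability definitions carefully, especially at the first and last rows, since the conditions $\lambda_i \geq \lambda_{i+1} + 2$ and ``more than one part of size $j$'' implicitly reference neighboring rows that may not exist. I would be explicit that the bottom row of a candidate 2-core cannot have length $\geq 2$, which is exactly the step that forces the staircase to terminate at $1$ rather than at some larger value, and I would treat the trivial case $n = 0$ (the empty partition) separately as a vacuous 2-core. With these edge cases addressed, both implications close and the characterization follows.
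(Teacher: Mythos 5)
Your proof is correct and follows essentially the same route as the paper's: the paper's one-line argument likewise deduces distinct parts from the absence of vertical dominoes and gap-free consecutive parts from the absence of horizontal dominoes, declaring the rest "easy to check." You simply make explicit the boundary detail the paper elides --- that a bottom row of length $\geq 2$ would itself carry a removable horizontal domino, forcing $\lambda_k = 1$ --- which is a welcome but not substantively different addition.
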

\begin{proof}
    A partition which has no consecutive parts that differ by more than 1 in size and contains no repeated parts must have all parts which are consecutive integers ending in 1. The converse is easy to check.
\end{proof}
It is well-known \cite[Prop 3.2]{olsson} that successive removal of dominoes from any $\sh(\lambda)$ results in a 2-core and this 2-core is independent of the order in which dominoes are removed. We call the 2-core thus obtained the \textit{2-core of $\lambda$} and denote it by $\core_2(\lambda)$.
\begin{example}
    The partition $(4,3,3,1)$ has the 2-core $(2,1)$ which can be obtained as shown
    \boks{0.3}
    \[
    \ydiagram{4,3,3,1}*[*(lightgray)]{0,0,1+2} \to \ydiagram{4,3,1,1}*[*(lightgray)]{0,0,1,1} \to \ydiagram{4,3}*[*(lightgray)]{0,1+2} \to \ydiagram{4,1}*[*(lightgray)]{2+2} \to \ydiagram{2,1}
    \]
\end{example}
In the literature, an analog of the division algorithm is defined for partitions using the concept of $2$-cores as a remainder and defining \textit{$2$-quotients} which are pairs of partitions $(\lambda^{(0)}, \lambda^{(1)})$ satisfying the analog of Euclid's division lemma \cite[Prop 3.6, (ii)]{olsson}
\[
|\lambda| = 2\left(|\lambda^{(0)}| + |\lambda^{(1)}|\right) + |\core_2(\lambda)|.
\]
To construct a 2-quotient of $\lambda$, we first label each cell $(i,j)\in \sh(\lambda)$ with $0$ if $i+j$ is even and $1$ if $i+j$ is odd. Define $\lambda^{(0)}$ to be the partition formed by the subset of cells $(i,j)$ such that the rightmost entry of row $i$ is 0 and the bottom entry of column $j$ is 1. Similarly, define $\lambda^{(1)}$ to be the partition defined by the subset of cells $(i,j)$ such that rightmost entry of row $i$ is 1 and the bottom entry of column $j$ is 0.
\begin{example}\boks{0.4}
    The partition $(4,3,3,1)$ can be filled as 
\[\ytableaushort{0101,101,010,1} \]
We have $\lambda^{(0)} = (2)$ which is formed by the cells $(3,1)$ and $(3,2)$. This is because the third row ends in a zero while both the first and the second column end in a 1. We also find $\lambda^{(1)} =(1,1)$ formed by the cells $(1,3)$ and $(2,3)$.
\end{example}
It is possible to reconstruct the original partition given the 2-core and 2-quotient \cite[Prop 3.7]{olsson}.
With these notions in hand, we are ready to define \textit{2-core towers}. But first, we need some notation for clarity.
\begin{notation}
Let $n\geq 0$. We write $\lambda^{(ij)}$ for $(\lambda^{(i)})^{(j)}$ for any binary string $i\in \{0,1\}^n$ and $j\in \{0,1\}$. Also, $\lambda^{(\varnothing)} = \lambda$ and $(\lambda^{(\varnothing)})^{(j)} = \lambda^{(j)}$.
\end{notation}
We recall the construction of 2-core towers as presented in \cite{olsson}. For a partition $\lambda$, construct an infinite rooted binary tree with nodes labeled by 2-cores as follows:
\begin{itemize}
\item Label the root node with $\core_2(\lambda) = \core_2(\lambda^{(\varnothing)})$.
\item If the length of the unique path from the root node to a node $v$ is $i$, then we say that the \textit{node $v$ is in the $i^{\mathrm{th}}$ row.} 
\item Every node in the $i^{\mathrm{th}}$ row has edges leading down to two nodes in the $(i+1)^{\text{st}}$ row. Define a recursive labeling as follows: if the label of some node $v$ in the $i^{\mathrm{th}}$ row is $\core_2(\lambda^{(b)})$ for some binary string $b$, then the two nodes in the $(i+1)^{\text{st}}$ row with parent node $v$ have labels $\core_2(\lambda^{(b0)})$ and $\core_2(\lambda^{(b1)})$ respectively.
\end{itemize}
This tree is known as the\textit{ 2-core tower of $\lambda$.}
\begin{example}
We start with the partition $(6,5,4,2,1,1)$. In order to compute its 2-core tower, we first compute the $2$-quotients at each stage. We draw a tree such that for a node labeled by $\lambda$, the left child is labeled by $\lambda^{(0)}$ and the right child is labeled by $\lambda^{(1)}$.\\
\adjustbox{scale = {0.5}{0.65}, center}{%
\begin{tikzcd}
	&&&&&&& {(6,5,4,2,1,1)} \\
	\\
	&&& {(1,1)} &&&&&&&& {(2,2,2)} \\
	\\
	& {(1)} &&&& \varnothing &&&& {(1)} &&&& {(1,1)} \\
	\varnothing && \varnothing && \varnothing && \varnothing && \varnothing && \varnothing && {(1)} && \varnothing
	\arrow[from=5-2, to=6-1]
	\arrow[from=5-2, to=6-3]
	\arrow[from=5-6, to=6-5]
	\arrow[from=5-6, to=6-7]
	\arrow[from=5-10, to=6-9]
	\arrow[from=5-10, to=6-11]
	\arrow[from=5-14, to=6-13]
	\arrow[from=3-4, to=5-2]
	\arrow[from=3-4, to=5-6]
	\arrow[from=5-14, to=6-15]
	\arrow[from=1-8, to=3-4]
	\arrow[from=1-8, to=3-12]
	\arrow[from=3-12, to=5-14]
	\arrow[from=3-12, to=5-10]
\end{tikzcd}
}
The corresponding 2-core tower is given by taking the 2-core of every node.\\

\adjustbox{scale = {0.5}{0.65}, center}{%
\begin{tikzcd}
	&&&&&&& {(2,1)} \\
	\\
	&&& \varnothing &&&&&&&& \varnothing \\
	\\
	& {(1)} &&&& \varnothing &&&& {(1)} &&&& \varnothing\\
	\varnothing && \varnothing && \varnothing && \varnothing && \varnothing && \varnothing && {(1)} && \varnothing
	\arrow[from=5-2, to=6-1]
	\arrow[from=5-2, to=6-3]
	\arrow[from=5-6, to=6-5]
	\arrow[from=5-6, to=6-7]
	\arrow[from=5-10, to=6-9]
	\arrow[from=5-10, to=6-11]
	\arrow[from=5-14, to=6-13]
	\arrow[from=3-4, to=5-2]
	\arrow[from=3-4, to=5-6]
	\arrow[from=5-14, to=6-15]
	\arrow[from=1-8, to=3-4]
	\arrow[from=1-8, to=3-12]
	\arrow[from=3-12, to=5-14]
	\arrow[from=3-12, to=5-10]
\end{tikzcd}
}
All the nodes in the subsequent rows are $\varnothing$, and so we omit them.
\end{example}

Every partition has a unique 2-core tower and every 2-core tower comes from a unique partition. This bijection is inherited from the core-quotient bijection. The constraints on the dimension of our partition also leads to constraints on the node labels of its 2-core tower. In particular, we will find restrictions on the sizes of partitions that can appear in the $k$th row.
\begin{defn}\label{defn:weight}
For $k\geq 0$, the \textit{sum of sizes of partitions in the $k^{\text{th}}$ row of the 2-core tower of $\lambda$ }is given by
\[
w_k(\lambda) := \sum\limits_{b\in\{0,1\}^k} |\core_2(\lambda^{(b)})|.
\]
\end{defn}

For all of our subsequent discussion, we will let $n = \sum_{i\geq 0} b_i2^i$, where $b_i\in\{0,1\}$. Define $\bin(n) := \{i\mid b_i = 1\}$ and $\bin'(n) := \bin(n)\backslash\{0\}$. Macdonald~\cite{mcd} gave a characterization of odd partitions $\lambda$ in terms of 2-core towers using the binary expansion of $|\lambda|$.
\begin{prop}\label{prop:mcdmain1}
Let $n$ be as above, then
    $\lambda$ is an odd partition if and only $w_i(\lambda) = b_i$ for all $i\geq 0$.
\end{prop}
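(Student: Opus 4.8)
The plan is to compute $v_2(f^\lambda)$ exactly in terms of the weights $w_k(\lambda)$ and then read off the condition $v_2(f^\lambda)=0$. Concretely, I will show
\[
v_2(f^\lambda) \;=\; \sum_{k\geq 0} w_k(\lambda) \;-\; |\bin(n)|,
\]
so that $\lambda$ is odd exactly when $\sum_k w_k(\lambda)$ attains its least possible value $|\bin(n)|$; a minimality argument will then force $w_i(\lambda)=b_i$.

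First I would apply the hook length formula $f^\lambda = n!\big/\prod_{c\in\sh(\lambda)} h(c)$ and take $2$-adic valuations, giving $v_2(f^\lambda)=v_2(n!)-\sum_c v_2(h(c))$. By Legendre's formula $v_2(n!)=n-|\bin(n)|$. Writing $H_k(\lambda)$ for the number of cells of $\sh(\lambda)$ whose hook length is divisible by $2^k$, and counting each cell once for every power of two dividing its hook, gives $\sum_c v_2(h(c))=\sum_{k\geq 1}H_k(\lambda)$.

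The crux is the identity $H_k(\lambda)=\sum_{b\in\{0,1\}^k}|\lambda^{(b)}|$. I would obtain this from the standard fact \cite{olsson} that passage to the $2$-quotient halves hook lengths: the cells of $\sh(\lambda)$ of even hook length correspond bijectively to all cells of $\sh(\lambda^{(0)})\sqcup\sh(\lambda^{(1)})$, a hook of length $2h$ matching a hook of length $h$. Hence $H_k(\lambda)=H_{k-1}(\lambda^{(0)})+H_{k-1}(\lambda^{(1)})$ with $H_0(\mu)=|\mu|$, and iterating yields the identity. Setting $S_k:=\sum_{b\in\{0,1\}^k}|\lambda^{(b)}|$ and invoking the core--quotient size relation $|\lambda^{(b)}|=|\core_2(\lambda^{(b)})|+2\big(|\lambda^{(b0)}|+|\lambda^{(b1)}|\big)$ gives the recursion $S_k=w_k(\lambda)+2S_{k+1}$ with $S_0=n$ (all sums are finite, since only finitely many tower nodes are nonempty). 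Unfolding $S_0$ recovers $n=\sum_k 2^k w_k(\lambda)$, while summing the recursion over all $k$ gives $\sum_{k\geq 1}S_k=n-\sum_{k\geq 0}w_k(\lambda)$; substituting these into the expression for $v_2(f^\lambda)$ produces the displayed formula.

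It remains to deduce $w_i=b_i$ from oddness. Since the $w_k$ are nonnegative integers with $\sum_k 2^k w_k = n$, and any coefficient $w_k\geq 2$ can be carried upward (replace $w_k$ by $w_k-2$ and increase $w_{k+1}$ by $1$) while strictly decreasing $\sum_k w_k$ yet preserving the value $n$, the binary expansion is the unique representation minimizing $\sum_k w_k$, with minimum $|\bin(n)|$. Thus $\sum_k w_k = |\bin(n)|$ forces $(w_k)$ to be binary, i.e.\ $w_k=b_k$; the converse is immediate. I expect the only real obstacle to be the hook-divisibility identity $H_k(\lambda)=\sum_b|\lambda^{(b)}|$: once this is taken from Olsson's monograph the rest is bookkeeping, but proving it from scratch requires the beta-set/abacus correspondence between the hook lengths of $\lambda$ and those of its $2$-quotient.
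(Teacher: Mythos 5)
Your proof is correct, and it is genuinely a proof where the paper offers none: the paper disposes of \cref{prop:mcdmain1} by citing Section 4 of \cite{mcd}, so your argument is best compared against Macdonald's own. Your central identity $v_2(f^\lambda)=\sum_{k\geq 0}w_k(\lambda)-|\bin(n)|$ is precisely the statement the paper later imports from \cite{mcd} (Eqs.~(3.3), (3.4)) in its proof of \cref{prop:mcdmain}, and your derivation --- hook length formula, Legendre's $v_2(n!)=n-|\bin(n)|$, the hook-halving bijection for $2$-quotients giving $H_k(\lambda)=H_{k-1}(\lambda^{(0)})+H_{k-1}(\lambda^{(1)})$, and the telescoping $S_k=w_k(\lambda)+2S_{k+1}$ --- is a faithful reconstruction of how that identity arises; each step checks out, including the finiteness needed to sum the recursions (since $S_{k+1}\leq S_k/2$, the tower is eventually empty) and the citation you flag, which is indeed in Olsson \cite{olsson}. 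Where you diverge is the equality case: Macdonald encodes solutions of $\sum_k 2^k w_k=n$ via a carry sequence $(z_i)$ satisfying $w_i+z_i=b_i+2z_{i+1}$ (the same bookkeeping the paper reuses in \cref{prop:mcdmain} for $v_2(f^\lambda)=1$), whereas you argue directly that the binary expansion is the unique minimizer of $\sum_k w_k$ among nonnegative integer solutions, via the carrying move $(w_k,w_{k+1})\mapsto(w_k-2,w_{k+1}+1)$ which preserves $n$ and strictly decreases the sum. Your minimization argument is cleaner and fully rigorous for the $v_2=0$ case (note that a putative minimizer with some $w_k\geq 2$ reduces to an all-binary solution of sum strictly below $|\bin(n)|$, contradicting uniqueness of binary expansion); its only relative cost is that, unlike the carry-sequence formalism, it does not by itself classify the near-minimal towers with $\sum_k w_k=|\bin(n)|+1$ that the paper needs for \cref{prop:mcdmain}, though it adapts easily since exactly one carry can occur there.
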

\begin{proof}
     The proof can be found in Section 4 of \cite{mcd}. 
\end{proof}

\section{Computing $a_2(n)$}\label{sec:2mod4}
Similar to the above proposition, we can find a simple description of partitions with dimension congruent to 2 modulo 4 in terms of 2-core towers. Define $v_2(n)$ to be the power of 2 that appears in the prime factorization of $n$. 
\begin{prop}\label{prop:mcdmain}
Let $\lambda$ be a partition of $n$. Then,
$v_2(f^{\lambda}) = 1$ if and only if for some $R \in \bin'(n)$, we have $w_{R -1}(\lambda) = b_{R-1} + 2$, $w_{R}(\lambda) = 0$ and $w_i(\lambda) = b_i$ for all $i\neq R, R-1$.
\end{prop}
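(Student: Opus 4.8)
The plan is to reduce the statement to a purely numerical fact about the row-weights $w_k(\lambda)$ by first recording an exact formula for the $2$-adic valuation $v_2(f^\lambda)$. Iterating the core--quotient size identity $|\mu| = 2\bigl(|\mu^{(0)}| + |\mu^{(1)}|\bigr) + |\core_2(\mu)|$ down the whole tower gives the conservation law
\[
n = \sum_{k\ge 0} 2^k\, w_k(\lambda),
\]
and I claim that the valuation itself is governed by the unweighted total
\[
v_2(f^\lambda) = \Bigl(\sum_{k\ge 0} w_k(\lambda)\Bigr) - |\bin(n)|.
\]
To prove this I would start from the hook-length formula $f^\lambda = n!/\prod_{x} h(x)$ and use the standard fact that the multiset of even hook lengths of $\lambda$, each divided by $2$, is exactly the union of the hook lengths of $\lambda^{(0)}$ and $\lambda^{(1)}$, while the remaining $|\core_2(\lambda)|$ hook lengths are odd. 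Feeding this into Legendre's formula $v_2(m!) = m - |\bin(m)|$ and simplifying with $n = 2\bigl(|\lambda^{(0)}|+|\lambda^{(1)}|\bigr) + |\core_2(\lambda)|$ yields the one-step recursion
\[
v_2(f^\lambda) = |\core_2(\lambda)| + |\bin(|\lambda^{(0)}|)| + |\bin(|\lambda^{(1)}|)| - |\bin(n)| + v_2\bigl(f^{\lambda^{(0)}}\bigr) + v_2\bigl(f^{\lambda^{(1)}}\bigr).
\]
Unfolding this recursion over the whole tree, the $|\core_2|$-terms accumulate to $\sum_k w_k(\lambda)$ while all the $|\bin(\cdot)|$-terms telescope to $-|\bin(n)|$, giving the displayed formula. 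Specializing to $v_2(f^\lambda)=0$ recovers \cref{prop:mcdmain1}, since $\sum_k w_k \ge |\bin(n)|$ with equality exactly for the binary digits.

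Granting the formula, the proposition becomes the assertion that $\sum_k w_k(\lambda) = |\bin(n)| + 1$ if and only if the sequence $(w_k(\lambda))$ has the stated shape. So the real content is the following combinatorial classification: \emph{among all sequences $(w_k)_{k\ge0}$ of non-negative integers with $\sum_k 2^k w_k = n$ and digit sum $\sum_k w_k = |\bin(n)| + 1$, each one is a single ``reverse carry'' of the binary expansion}, i.e.\ there is a unique $R \in \bin'(n)$ with $w_R = 0$, $w_{R-1} = b_{R-1} + 2$, and $w_k = b_k$ otherwise. This is precisely the family in the statement, and a direct check gives $\sum_k w_k = |\bin(n)|+1$ and $\sum_k 2^k w_k = n$ for each such $R$, so the $\Leftarrow$ direction is immediate from the valuation formula.

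For the $\Rightarrow$ direction I would argue by looking at the largest index $L$ at which $(w_k)$ differs from the binary digits $(b_k)$. Comparing the truncated sums $\sum_{k\le L} 2^k w_k = \sum_{k \le L} 2^k b_k < 2^{L+1}$ rules out $w_L > b_L$: if $w_L \ge 2$ the left side already reaches $2^{L+1}$, and $w_L = 1 > b_L = 0$ forces the lower part to be negative. Hence $w_L < b_L$, so $b_L = 1$ and $w_L = 0$; set $R := L$, which must satisfy $R \ge 1$ since the value $\ge 2^R$ cannot be placed below position $R$ otherwise, so $R \in \bin'(n)$. The positions below $R$ must now represent $2^R + \sum_{k<R} 2^k b_k$ using only positions $<R$ and with digit sum $\bigl(\sum_{k<R} b_k\bigr) + 2$; since this value exceeds $2^R$ but the only available top position is $R-1$, the forbidden bit at position $R$ must be split, forcing $w_{R-1} = b_{R-1} + 2$, after which the residual value $\sum_{k<R-1}2^k b_k$ is represented with its minimal possible digit sum and hence in binary, giving $w_k = b_k$ for $k < R-1$.

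I expect the main obstacle to be this last uniqueness step: pinning down that a representation one unit above the minimal digit sum admits exactly one split, located at the unique forbidden high bit. I would handle it either by the highest-coefficient bound sketched above or, more cleanly, by first proving the auxiliary lemma that the representations of any $V$ with digit sum $|\bin(V)| + 1$ are precisely the single reverse carries of its binary form, and then intersecting with the constraint $w_k = 0$ for $k \ge R$. It is worth emphasising that no realizability of these weight sequences by genuine $2$-core towers is needed for the biconditional, since $\lambda$ is given; realizability becomes relevant only when passing to the enumeration in \cref{2mod4thm}.
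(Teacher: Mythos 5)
Your proposal is correct in substance and reaches the proposition by a genuinely more self-contained route than the paper. The paper imports both key ingredients from Macdonald: the valuation identity $v_2(f^\lambda)=\sum_i w_i(\lambda)-\sum_i b_i$ is quoted from his equations (3.3)--(3.4), and the classification of weight sequences with digit-sum excess one is done via his carry sequence $(z_i)$ with $z_0=0$ and $w_i+z_i=b_i+2z_{i+1}$, whose summation gives $\sum_{i\geq 1}z_i=1$ and hence a single carry at some $R\in\bin'(n)$, from which $b_R=1$, $w_R=0$, $w_{R-1}=b_{R-1}+2$ drop out. You instead prove the valuation identity from scratch---your one-step recursion via the $2$-quotient hook bijection plus Legendre's formula is correct, and the telescoping of the $|\bin(\cdot)|$ terms down the tower does yield $v_2(f^\lambda)=\sum_k w_k(\lambda)-|\bin(n)|$---and you replace the carry sequence by a direct extremal argument at the largest index where $(w_k)$ differs from $(b_k)$. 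That buys independence from citing Macdonald's Section 4, at the cost of the uniqueness step you flag yourself: the assertion that the residual value forces $w_{R-1}=b_{R-1}+2$ does not follow from the top-position bound alone (large low coefficients are excluded only by the digit-sum constraint), so your ``more cleanly'' option via the auxiliary lemma is the right fix. That lemma is easiest to prove by exactly the carry device the paper uses: set $z_0=0$, $z_{i+1}=(w_i+z_i-b_i)/2$; integrality and non-negativity are automatic because $\sum_{k>i}2^k w_k$ is a multiple of $2^{i+1}$ not exceeding $V$, hence at most $\sum_{k>i}2^k b_k$, and excess one then forces a unique $z_R=1$---so your plan closes, with its final step essentially reconstructing the paper's proof. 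One incidental slip: it is false that only $|\core_2(\lambda)|$ hook lengths of $\lambda$ are odd (the odd hooks number $|\lambda^{(0)}|+|\lambda^{(1)}|+|\core_2(\lambda)|$); this is harmless for your recursion, since odd hooks contribute nothing to $v_2$ and only the count $|\lambda^{(0)}|+|\lambda^{(1)}|$ of even hooks enters the computation.
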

\begin{proof}
We write $w_i:= w_i(\lambda)$.
Through the exposition in section 4 of \cite{mcd}, we know that there exists a sequence of non-negative integers $(z_i)_{i\geq 0}$ with $z_0 = 0$ such that $
w_i + z_i = b_i + 2z_{i+1}$.\footnote{The notation in \cite{mcd} differs from ours as Macdonald uses $a_i$ instead of $w_i$, $\alpha_i$ instead of $b_i$ and $b_i$ instead of $z_i$.}
Summing over all non-negative numbers gives
\[
\sum\limits_{i\geq 0} w_i + \sum\limits_{i\geq 0} z_i = \sum\limits_{i\geq 0} b_i + 2\sum\limits_{i\geq 1} z_i.
\]
From Section 3 of \cite[Eqs. (3.3), (3.4)]{mcd}, we find that
\[
\sum\limits_{i\geq 0} w_i - \sum\limits_{i\geq 0}b_i  = v_2(f^\lambda)
\]
which reduces in our case to $\sum\limits_{i\geq 0} w_i = \sum\limits_{i\geq 0}b_i  + 1$. Using this and rearranging the sums gives 
\[
z_0 + 1 = \sum\limits_{i\geq 1} z_i
\]
As $z_0 = 0$, we can deduce that $z_i = 1$ for exactly one $i > 0$ and zero for the rest. Putting $z_R = 1$ gives us the equations,
\begin{align*}
w_{R-1} &= b_{R-1} + 2\\
w_R + 1 &= b_R.
\end{align*}
Notice that the second equation forces $b_R = 1$ and so $z_R = 1$ is possible only when $R \in \bin'(n)$. We find $w_R = 0$ and $w_{R-1} = b_{R-1} + 2$. The rest is clear as $z_i = 0$ for $i\neq R$.
\end{proof}
This proposition places explicit restrictions on the sizes of partitions possible in each row of the 2-core tower. For odd partitions, \cref{prop:mcdmain1} tells us that each row $i$ can be either empty or have one box depending on the bit $b_i$. This phenomenon carries over to this case except for rows $R$ and $R-1$ wherein the former is empty, while the latter contains either 2 or 3 boxes. The computation of $a_2(n)$ will follow from the combinatorics of how these boxes are distributed.

\begin{notation}
For integers $w,k\geq 0$ and $\mu_{[i]}$ a 2-core for $1\leq i \leq 2^k$, let $\T_k(w)$ denote the number of solutions $\left(\mu_{[1]}, \ldots, \mu_{[2^k]}\right)$ to \[\sum\limits_{i=1}^{2^k}|\mu_{[i]}| = w.\]
\end{notation}
\begin{lem}\label{lem:weight}
With the above notation, we have
\[
\T_k(w) = \begin{cases}
1, & \text{if }w = 0\\
2^k, &\text{if } w = 1\\
\binom{2^k}{2},&\text{if } w = 2\\
\binom{2^k}{3} + 2^k, & \text{if }w = 3.
\end{cases}
\]
\end{lem}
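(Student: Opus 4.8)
The plan is to reduce the enumeration to a statement about triangular numbers. By \cref{lem:2cores}, every 2-core is a staircase $(m, m-1, \ldots, 2, 1)$ for some $m \geq 0$, and its size is $\binom{m+1}{2}$. As $m$ ranges over the non-negative integers this size runs through the triangular numbers $0, 1, 3, 6, 10, \ldots$, and the assignment $m \mapsto (m, m-1, \ldots, 1)$ is a bijection. Hence for each triangular number there is exactly one 2-core of that size, and no 2-core has a size that is not triangular. Consequently a tuple $(\mu_{[1]}, \ldots, \mu_{[2^k]})$ of 2-cores is determined by the tuple of its sizes, so $\T_k(w)$ is precisely the number of ordered solutions $\sum_{i=1}^{2^k} t_i = w$ in which each $t_i$ is a triangular number.

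From this reformulation I would simply run through the four values of $w$. For $w=0$ every $t_i$ must vanish, giving one solution; for $w=1$ exactly one of the $2^k$ summands equals $1$ and the rest are $0$, giving $2^k$ solutions. For $w=2$ the only triangular numbers not exceeding $2$ are $0$ and $1$, so the sum must be realized by two summands equal to $1$; choosing their positions yields $\binom{2^k}{2}$. For $w=3$ the available triangular parts are $0, 1, 3$, and there are exactly two partitions of $3$ into such parts, namely $3 = 1+1+1$ and $3 = 3$. The first contributes $\binom{2^k}{3}$ (choosing the three positions that carry a $1$) and the second contributes $2^k$ (choosing the single position carrying a $3$), so $\T_k(3) = \binom{2^k}{3} + 2^k$.

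There is no serious obstacle once the bijection with triangular numbers is in place; the single point deserving attention is that $2$ is \emph{not} a triangular number. This is exactly what rules out any further contribution in the last two cases: for $w=2$ there is no 2-core of size $2$, and for $w=3$ there is no decomposition $2+1$. I would state the triangular-number identification as the opening sentence of the proof and then simply tabulate the four cases, as the arithmetic in each is immediate.
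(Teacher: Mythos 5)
Your proof is correct and follows essentially the same route as the paper: both rest on \cref{lem:2cores} to identify all 2-cores as staircases and then enumerate the four cases $w=0,1,2,3$ by placing the small 2-cores ($\varnothing$, $(1)$, and $(2,1)$) among the $2^k$ positions. Your explicit observation that 2-core sizes are exactly the triangular numbers---so that $2$ is excluded and $3$ admits only the decompositions $1+1+1$ and $3$---is a clean way of packaging the same case analysis the paper performs directly.
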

\begin{proof}
We prove the lemma by dividing it into cases and using the result in \cref{lem:2cores}.
\begin{enumerate}
\item When $w = 0$, the only solution is $\mu_{[i]}= \varnothing$ for all $i$. Thus, $\T_k(0) = 1$. 
\item When $w = 1$, we have $2^k$ possible options for $\mu_{[i]} = (1)$ as $1\leq i\leq 2^k$ which gives $\T_k(1) = 2^k$.
\item For $w = 2$, we must choose $\mu_{[i]} = \mu_{[j]} = (1)$, for some $i$ and $j$, and the rest $\varnothing$. Thus, $\T_{k}(2) = \binom{2^k}{2}$.
\item In this case, we can either choose $i_1, i_2, i_3$ such that $\mu_{[i_1]} = \mu_{[i_2]} = \mu_{[i_3]} = (1)$ which can be done in $\binom{2^k}{3}$ ways or choose a $j$ such that $\mu_{[j]} = (2,1)$ which can be done in $2^k$ ways. This gives us $\T_k(3) = \binom{2^k}{3} + 2^k$.
\end{enumerate}
\end{proof}

\begin{notation}\label{notation:wk}
Let $n =\sum\limits_{i\geq 0} b_i2^i$ with $b_i\in \{0,1\}$. For $k\in\bin'(n)$, define a sequence of non-negative integers, $\mathbf{w}_k(n) := (w^k_i(n))_{i\geq 0}$ with the following properties:
\begin{enumerate}
\item $w^k_{k-1}(n) = b_{k-1}+2$,
\item $w^k_{k}(n) = 0$, and 
\item $w^k_i(n) =  b_i$ for all other values of $i$.
\end{enumerate}
Let $\T(\mathbf{w}_k(n)) := \prod\limits_{i\geq 0} \T_i(w^k_i(n))$. 
\end{notation}
A sequence $\mathbf{w}_k$ corresponds to constructing a 2-core tower of $\lambda$ with $v_2(f^\lambda)= 1$ such that $w_i(\lambda) = w^k_i(n)$, for some $k\in \bin'(n)$. The quantity $\T(\mathbf{w}_k(n))$ counts all such partitions.

\begin{proof}[Proof of \cref{2mod4thm}]
Fix an $n\geq 1$. By the above discussion as well as \cref{prop:mcdmain}, we have that the number of partitions with dimensions congruent to 2 modulo 4 is given by
\begin{align*}
a_2(n) &= \sum\limits_{k\in \bin'(n)} \T(\mathbf{w}_k(n))\\
&= \sum\limits_{k\in \bin'(n)} \prod\limits_{i\geq 0} \T_i(w^k_i(n)).
\end{align*}
Denote the binary expansion of $n$ by $\sum\limits_{i\geq 0}b_i2^i$ as before. If we suppose $n = 2^R + m$ with $m<2^R$, then $b_R = 1$ and $b_i = 0$ for $i > R$. For $k\in \bin'(m) = \bin'(n)\backslash\{R\}$, we have $w^k_i(n) = w^k_i(m)$ for all $i\neq R$ (*). We break the sum up:
\[
a_2(n) = \left(\sum\limits_{k\in \bin'(n)\backslash \{R\}} \prod\limits_{i\geq 0} \T_i(w^k_i(n)) \right)+ \T(\mathbf{w}_R(n)).
\]
For the first term, we have
\begin{align*}
\sum\limits_{k\in \bin'(n)\backslash \{R\}} \prod\limits_{i\geq 0} \T_i(w^k_i(n)) &= \sum\limits_{k\in \bin'(n)\backslash \{R\}} \T_R(w^k_R(n))\prod\limits_{i \neq R} \T_i(w^k_i(n)) \\
&= 2^R\sum\limits_{k\in \bin'(m)} \prod\limits_{i\neq R} \T_i(w^k_i(n))\qquad{\color{lightgray}\text{(as } w_R^k(n) = b_R = 1)}\\
&= \frac{2^R}{\T_R(0)}\sum\limits_{k\in \bin'(m)} \T_R(0)\prod\limits_{i\neq R} \T_i(w^k_i(m)) \quad{\color{lightgray}\text{(by (*))}}\\
&= \frac{2^R}{\T_R(0)}\sum\limits_{k\in \bin'(m)} \prod\limits_{i\geq 0} \T_i(w^k_i(m))\\
&= 2^R a_2(m).
\end{align*}
Now, we deal with the second term:
\begin{align*}
\T(\mathbf{w}_R(n)) &= \T_{R-1}(w^R_{R-1}(n))\T_{R}(w^R_R(n))\prod\limits_{i\neq R, R-1} \T_i(w^R_i(n))\\
&= \T_{R-1}(w^R_{R-1}(n))\T_{R}(w^R_R(n))\prod\limits_{i\neq R, R-1} \T_i(b_i)\\
&= \frac{\T_{R-1}(w^R_{R-1}(n))\T_{R}(w^R_R(n))}{\T_{R-1}(b_{R-1})\T_{R}(b_R)} a(n).
\end{align*}

The appearance of $a(n)$ is the consequence of \cref{prop:mcdmain1} and the cases $w = 0,1$ in \cref{lem:weight}. We have $b_R = 1$, $w^R_R(n) = 0$ and $w^R_{R-1}(n) = b_{R-1} + 2$. This simplifies the above expression to
\[
\frac{\T_{R-1}(b_{R-1} + 2)}{2^{R}\T_{R-1}(b_{R-1})} a(n).
\]
By \cref{lem:weight} and $a(n) = 2^Ra(m)$, we can complete the proof to get 
\[a_2(n) = \begin{cases}
2^R\cdot a_2(m) + \binom{2^{R-1}}{2}\cdot a(m), & \text{ if }m < 2^{R-1}\\
2^R\cdot a_2(m) + \left(\binom{2^{R-1}}{3} + 2^{R-1}\right) \cdot \displaystyle{\frac{a(m)}{2^{R-1}}}, & \text{ if } 2^{R-1} < m < 2^R.
\end{cases}\] 
The first case corresponds to $b_{R-1} = 0$ and the second one corresponds to $b_{R-1} = 1$.
\end{proof}

Recall that a \textit{sparse} number has no consecutive 1s in its binary expansion. In that case, this theorem takes a closed form as evident in \cref{2mod4cor}. Although we can show the corollary using the recursive relations in we just proved, we choose to do perform direct computations as we believe they are more illuminating.
\begin{proof}[Proof of \cref{2mod4cor}]
Let $n$ be a sparse number. The summation as before is:
\begin{align*}
a_2(n) &= \sum\limits_{k\in \bin'(n)} \T(\mathbf{w}_k(n))\\
&= \sum\limits_{k\in \bin'(n)} \frac{\T_{k-1}(w^k_{k-1}(n))\T_{k}(w^k_k(n))}{\T_{k-1}(b_{k-1})\T_{k}(b_k)} a(n).
\end{align*}
As $n$ is sparse, for $k\in \bin'(n)$, we have $b_{k-1} = 0$, $w^k_{k}(n) = 0$ and $w^k_{k-1}(n) = b_{k-1} +2 = 2$.\\
This gives us the following summation,
\[
a_2(n) = \sum\limits_{k\in \bin'(n)} \frac{\T_{k-1}(2)}{\T_{k}(1)} a(n).
\]
By using the results from \cref{lem:weight}, this simplifies to
\begin{align*}
a_2(n) &=  a(n)\sum\limits_{k\in \bin'(n)} \frac{\binom{2^{k-1}}{2}}{2^k}\\
&=  a(n)\sum\limits_{k\in \bin'(n)} \frac{(2^{k-1})(2^{k-1}-1)}{2\cdot 2^k}\\
&=  \frac{a(n)}{8}\sum\limits_{k\in \bin'(n)} 2^k - 2.\\
\end{align*}
If $n$ is even, then this summation becomes $n - 2\nu(n)$. If $n$ is odd, the summation becomes $(n-1) + 2(\nu(n) - 1) = (n-1) + 2(\nu(n-1))$. This gives us the final answer as, 
\[a_2(n) = \begin{cases}
\displaystyle{\frac{a(n)}{8}(n - 2 \nu(n))},& \text{if }n \text{ is even}\\
a_2(n-1), &\text{if } n \text{ is odd}.
\end{cases}\]
\end{proof}
\begin{remark}
The theorem above immediately leads us to the value of $m_4(n)$. We have $m_4(n) = a_1(n) + a_2(n) + a_3(n) = a(n) + a_2(n)$ by definition. If the number of partitions of $n$ is denoted by $p(n)$, then the number of partitions with dimensions divisible by 4 is given by $p(n) - a(n) - a_2(n)$.
\end{remark}

\section{Closing Remarks and Open Problems}\label{sec:problems}
This paper deals with the case of 2 modulo 4 which can be generalized. One may wish to compute even dimensional partitions modulo $2^k$ for higher values of $k$ but the recursions have more cases and too many terms to be of practical use. Another generalization might be to compute partitions with dimensions $p$ modulo $p^2$, but this endeavor is also racked with a multitude of cases and cumbersome calculations. The recursions obtained here and the specialization in the sparse case make this problem perfectly situated to solve explicitly.

\section{Acknowledgments}
This work was completed at the Indian Institute of Science Education and Research, Pune under the supervision of Steven Spallone who introduced me to this fascinating problem and provided help and feedback. I thank Jyotirmoy Ganguly for the discussions and encouragement. Lastly, my sincerest thanks to Nick Loehr for his extremely detailed and consistent feedback on the manuscript.
\printbibliography
\end{document}